\definecolor {processred}{cmyk}{0,0.96,0,0}
\newcommand{\cvd}{\hspace*{\fill}
	{\rm \hbox{\vrule height 0.2 cm width 0.2cm}}}
\renewcommand{\qed}{\cvd}
\newtheorem{theorem}{Theorem}[section]
\newtheorem{lemma}[theorem]{Lemma}
\newtheorem{remark}[theorem]{Remark}
\theoremstyle{definition}
\newtheorem{definition}[theorem]{Definition}
\newtheorem{example}[theorem]{Example}
\newtheorem*{PMa}{The proof of Theorem A}
\newtheorem*{PMb}{The proof of Theorem B}
\tikzstyle{vertex}=[circle, draw, inner sep=0pt, minimum size=6pt] 
\newcommand{\vertex}{\node[vertex]}
\newcommand{\Cay}{\mathrm{Cay}}
\newcommand*{\rom}[1]{\expandafter\@slowromancap\romannumeral #1@}
\begin{document}
	
\title{ On the subgroup Regular sets in  cayley graphs}

\author[Y. Khaefi]{Yasamin Khaefi$^1$}
\author[Z. Akhlaghi]{Zeinab Akhlaghi$^{1,2}$}
\author[B. Khosravi]{Behrooz Khosravi$^1$}

\address{$^{1}$ Faculty of Mathematics and Computer Science, Amirkabir University of Technology (Tehran Polytechnic), 15914 Tehran, Iran.}
\address{$^{2}$ School of Mathematics,
	Institute for Research in Fundamental Science (IPM)
	P.O. Box:19395-5746, Tehran, Iran. }
\email{\newline \text{(Y. Khaefi) }khmath@aut.ac.ir \newline \text{(Z. Akhlaghi) }z\_akhlaghi@aut.ac.ir  \newline \text{(B. Khosravi) }khosravibbb@yahoo.com}

\thanks{
	The second author  is supported by a grant from IPM (No. 1402200112).}
\subjclass[2000]{05C25 , 05C69 , 94B99}

\begin{abstract}
A subset $C$ of the vertex set of a graph $\Gamma$ is said to be $(a,b)$-regular if  $C$ induces an $a$-regular subgraph and every vertex outside $C$ is adjacent to exactly $b$ vertices in  $C$. In particular, if  $C$ is an $(a,b)$-regular set of some Cayley graph on a finite group $G$, then  $C$ is called an $(a,b)$-regular set of $G$ and a $(0,1)$-regular set is called a perfect code of $G$. In [Wang, Xia and Zhou, Regular sets in Cayley graphs, J. Algebr. Comb., 2022] it is proved that if $H$ is a normal subgroup of $G$, then $H$ is a perfect code of $G$ if and only if it is an $(a,b)$-regular set of $G$, for each $0\leq a\leq|H|-1$ and $0\leq b\leq|H|$ with $\gcd(2,|H|-1)\mid a$. In this paper, we generalize this result and show that a subgroup $H$ of $G$ is a perfect code of $G$ if and only if it is an $(a,b)$-regular set of $G$, for each $0\leq a\leq|H|-1$ and $0\leq b\leq|H|$ such that $\gcd(2,|H|-1)$ divides $a$. Also, in  [J. Zhang, Y. Zhu, A note on regular sets in Cayley graphs, Bull. Aust. Math. Soc., 2023] it is proved that if $H$ is a normal subgroup of $G$, then $H$   is an $(a,b)$-regular set of $G$, for each $0\leq a\leq|H|-1$ and  $0\leq b\leq|H|$ such that $\gcd(2,|H|-1)$ divides $a$ and $b$ is even. We extend this result and we prove that the normality condition is not needed. 

\end{abstract}
\keywords{}


\maketitle

\section{Introduction}
In this paper, all groups are finite. If $\Gamma$ is a graph, we denote by ${\bf V}(\Gamma)$ and ${\bf E}(\Gamma)$ the set of its vertices and the set of its edges, respectively.   Let $\Gamma=({\bf V}(\Gamma), {\bf E}(\Gamma))$ be a simple graph. A subset $C$ of ${\bf V}(\Gamma)$ is called a perfect code of $\Gamma$, if every vertex of ${\bf V}(\Gamma)\setminus C$ is adjacent to exactly one vertex of $C$ and  there is no edges between vertices of $C$. Regular set is a generalization of perfect code of a graph \cite{R1} and is defined as follows: for non-negative integers $a, b$, a subset $C$ of ${\bf V}(\Gamma)$ is called an $\left(a,b \right) $- regular set in $\Gamma$,  if every vertex of ${\bf V}(\Gamma)\setminus C$ is adjacent to exactly $b$ vertices of $C$ and also every vertex of $C$ is adjacent  to exactly $a$ vertices of $C$. Clearly, a perfect code is a $(0,1)$-regular set. 

For a graph $\Gamma=({\bf V}(\Gamma), {\bf E}(\Gamma))$, a partition of ${\bf V}(\Gamma)$ with cells $\mathcal{V}=\{ V_1,\dots,V_k\}$ is called an equitable partition, when each cell induces a regular subgraph and edges between any two different cells construct a biregular bipartite graph \cite[Section 9.3]{godsil}. Equivalently, for $i\ne j$, any vertex of $V_i$, say $x$, is adjacent to $b_{ij}$ vertices of $V_j$, independent of the choice of $x$.  A  $k\times k$  matrix $M=(b_{ij})$ is called the quotient matrix of the partition $\mathcal{V}$. 

Let  $\Gamma$  be  a connected $ r$-regular graph. Then all row sums  of  the quotient matrix  $M$ is equal to
$r$, and so $ r$ is a simple eigenvalue of $M$ \cite[ Theorem 9.3.3]{godsil}. The equitable partition
$\mathcal{V}$ of $\Gamma$ is said to be $\mu$-equitable  if all eigenvalues of its quotient matrix $M$ other
than $r$ are equal to $\mu$. In \cite[ Corollary 2.3]{bcg} it is shown that a non-trivial coarsening
of a $\mu$-equitable partition is $\mu$-equitable.  So it is important to study equitable  partition with exactly two parts. 


Moreover,  an $(a, b)$-regular set in a $r$-regular graph
$\Gamma$ is exactly a completely regular code $C$ in $\Gamma$ (see, for example, \cite{n}]) such that the
corresponding distance partition has exactly two parts, namely $\{C, {\bf V} \setminus  C\}$. 
 An equitable partition with two parts is also called perfect 2-coloring \cite{2coloring}. The notion of perfect coloring is a common research subject in coding theory \cite{hamming2coloring1, hamming2coloring2}.

In this paper, we have studied regular sets in Cayley graphs. Let $G$ be a group and $S$ an inverse-closed subset of $G\setminus\left\lbrace 1\right\rbrace $. A Cayley graph $\Gamma=\Cay(G,S)$ is a graph whose vertex set is $G$ and two vertices $x,y\in G$ are adjacent if and only if $yx^{-1}\in S$. Perfect codes of Cayley graphs have been studied in many papers \cite{1, cyclic, 0, on, total} and specially because of their connection to Hamming codes, they are interesting objects.  When a subgroup of a  group $G$ is a perfect code of $\Cay(G,S)$, it is simply called a perfect code of $G$. In particular, authors investigated under which conditions a subgroup is a perfect code of the group \cite{on, paper}. Due to the link between perfect codes and regular sets, it is natural to study their mutual relation \cite{subregularset,regularset,note}. Authors of \cite{regularset} showed that a normal subgroup $H$ of a group $G$ is a perfect code of $G$ if and only if  $H$ is an $(a,b)$-regular set of $G$ for every pair of integers $a,b$ with $0\leq a\leq|H|-1$ and  $0\leq b\leq|H|$ such that $\gcd(2,|H|-1)\mid a$. In \cite{subregularset}, they proved the similar result for every subgroup perfect code of  a generalized dihedral group. In this paper, we improve the result and prove the following for any arbitrary subgroup perfect code $H$:

{\bf Theorem A}
 Let $H$ be a non-trivial  subgroup of $G$. 
 Then $H$ is a perfect code of $G$ if and only if it is an $(a,b)$-regular set of $G$ for every pair of integers $a,b$ with $0\leq a\leq|H|-1$ and $0\leq b\leq|H|$, such that $\gcd(2,|H|-1)\mid a$. 
 
 \smallskip
 
 In \cite{note}, it is shown that for  integer  $a$ and even integer $b$,  with $0\leq a\leq|H|-1$ and $0\leq b\leq|H|$, such that $\gcd(2,|H|-1)$ divides $a$, every normal subgroup $H$ is an  $(a,b)$-regular  set in $G$. As another result we cross out the normality hypothesis and we prove that:
 
 \smallskip
 
 {\bf Theorem B}
 Let $H$ be a non-trivial  subgroup of $G$. 
 Then, for  integer  $a$ and even integer $b$,  with $0\leq a\leq|H|-1$ and $0\leq b\leq|H|$, such that $\gcd(2,|H|-1)$ divides $a$,  $H$ is an  $(a,b)$-regular set  in $G$. 

 \smallskip

 Throughout the paper,  we use the following notations. If $L\subseteq  {\bf E}(\Gamma)$, then by ${\bf V}(L)$ we mean a subset of ${\bf V}(\Gamma)$ whose elements are the ends of the edges in $L$. If $V_0\subset {\bf V}(\Gamma)$, by $\Gamma[V_0]$ we mean the induced subgraph generated by $V_0$. 

\section{Main Results }

Throughout this section, let $G$ be a group,  $H$ a subgroup of $G$ and $x\in G\setminus H$. We denote
by $\Omega_x$ the set of right cosets of $H$ in $HxH\cup Hx^{-1}H$.  Firstly, we define two   graphs related to the cosets of $H$. 

\begin{definition} \label{def1} The simple graph (a graph without parallel edges and loops)  $\Delta_x$  is defined  as follows: 
The vertex set  of $\Delta_x$ consists of  elements of  $HxH\cup Hx^{-1}H$ and two distinct vertices $y$ and $z$ are adjacent if $yz=1$ and $Hy\not =Hz$.
Clearly, the degree of each vertex in  $\Delta_x$ is $0$ or $1$. 
\end{definition}

 \begin{definition}\label{def2}
The  non-simple graph $\Gamma_x$ is a graph with ${\bf V}(\Gamma_x)=\Omega_x$
and two distinct  cosets  $Hy$ and $Hz$  are adjacent with   $m$  parallel edges $\mathfrak{e}_{\{y,z\}}^i$,  for  $i=1,\dots,m$,  if $|z^{-1}H\cap Hy|=m$, i.e., the inverses of exactly $m$ elements in $Hy$ belong to $Hz$.  
\end{definition}

 Let $y\in HxH$ and $z\in Hx^{-1}H$ such that  $Hy\not =Hz$.  We will prove in Lemma \ref{key2}, that $z^{-1}H\cap Hy\ne \emptyset$. We define  $B_{\{y,z\}}=(z^{-1}H\cap Hy) \cup(z^{-1}H\cap Hy)^{-1} =(z^{-1}H\cap Hy) \cup (y^{-1}H\cap Hz)$.  By  Definition \ref{def1},  if   $z^{-1}H\cap Hy=\{t_1,\dots,t_m\}$ and $Hy\not = Hz$,   then  $B_{\{y,z\}}=\{t_1,\dots,t_m\}\cup \{t^{-1}_1,\dots, t^{-1}_m\}$ and  the induced subgraph $\Delta_x[B_{\{y,z\}}]$ is a perfect matching with $2m$ vertices and $m$  edges $\{t_i,t^{-1}_i\}$.  Let  $\mathfrak{E}_{\{y,z\}}=\{\mathfrak{e}_{\{y,z\}}^i\ | \ i=1,\dots , m\}$ be the set of  $m$ distinct edges between $Hy$ and $Hz$ in $\Gamma_x$. 
 We note that  $|\mathfrak{E}_{\{y,z\}}|=|{\bf E}(\Delta_x[B_{\{y,z\}}])|=m$.  Therefore,  $\phi_{\{y,z\}}:\mathfrak{E}_{\{y,z\}}\rightarrow {\bf E}(\Delta_x[B_{\{y,z\}}])$, which is defined by $\phi_{\{y,z\}}(\mathfrak{e}_{\{y,z\}}^i)=\{t_i,t^{-1}_i\}$,  is a bijection.

  Let $\phi$ be a function from ${\bf E}(\Gamma_x)$ to ${\bf E}(\Delta_x)$ such that   $\phi|_{\mathfrak{E}_{\{y,z\}}}=\phi_{\{y,z\}}$, for every $y\in HxH$ and $z\in Hx^{-1}H$,  where $Hy\not =Hz$.  Then, $\phi$ is a bijection.   In the rest of the paper we use the bijection $\phi$ several times without further reference.


The next lemma is the properties of the perfect code subgroup of a finite group and we use it in the main result. 
\begin{lemma}\label{1.2}(see 
	\cite[Theorem 1.2]{1} and \cite[Lemma 2.2]{9}) Let $G$ be a group and $H$ a subgroup of $G$. Then the following are equivalent:
	
	(a) $H$ is a perfect code of $G$;
	
	(b) there exists an inverse-closed right transversal of $H$ in $G$;
	
	(c) for each $x \in G$ such that $x^2 \in H$ and $| H| / | H \cap H^x| $ is odd, there exists $y \in Hx$ such that $y^2=1$;
	
	(d) for each $x \in G$ such that $HxH = Hx^{-1}H$ and $|H|/|H \cap H^x|$ is odd, there exists $y \in Hx$ such that $y^2 = 1$.
\end{lemma}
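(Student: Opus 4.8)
The plan is to prove $(a)\Leftrightarrow(b)$ directly and then close the cycle $(b)\Rightarrow(c)\Leftrightarrow(d)\Rightarrow(b)$. For $(a)\Leftrightarrow(b)$ I would unwind the definition of a perfect code: if $H$ is a perfect code of $\Cay(G,S)$ with $S=S^{-1}$ and $1\notin S$, then the absence of edges inside $H$ forces $S\cap H=\emptyset$, while the requirement that each $g\in G\setminus H$ have a unique neighbour in $H$ — the neighbour set being $Sg\cap H$, of cardinality $|S\cap Hg^{-1}|$ — forces $|S\cap Hc|=1$ for every right coset $Hc\ne H$; hence $S\cup\{1\}$ is an inverse-closed right transversal of $H$ in $G$. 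Conversely, from an inverse-closed right transversal $T$ one notes that its representative $t_0$ of the coset $H$ satisfies $t_0^{-1}\in T\cap H=\{t_0\}$, so $t_0^2=1$; after replacing $t_0$ by $1$ (which keeps $T$ an inverse-closed right transversal) the set $S:=T\setminus\{1\}$ realises $H$ as a perfect code of $\Cay(G,S)$.

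Next I would settle $(c)\Leftrightarrow(d)$ and then $(b)\Rightarrow(c)$. For $(c)\Leftrightarrow(d)$: if $x^2\in H$ then $x^{-1}=x^{-2}x\in Hx$, so $HxH=Hx^{-1}H$, giving $(d)\Rightarrow(c)$ at once; conversely, if $HxH=Hx^{-1}H$, writing $x^{-1}=h_1xh_2$ with $h_i\in H$ and setting $y=h_1x$ gives $y^2=h_1h_2^{-1}\in H$, $Hy=Hx$, and $H^y=x^{-1}Hx=H^x$, so $(c)$ applied to $y$ produces an involution in $Hx$. For $(b)\Rightarrow(c)$: given an inverse-closed right transversal $T$ and $x$ with $x^2\in H$ and $k:=|H|/|H\cap H^x|$ odd, the double coset $D=HxH$ equals $Hx^{-1}H$ and so is inverse-closed; hence $T\cap D$ is inverse-closed and is a right transversal of $H$ inside $D$, so $|T\cap D|=k$ is odd and inversion must fix some $t\in T\cap D$, that is, $D$ contains an involution (or $1$, if $x\in H$). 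Now for $h\in H$ the conjugate $h^{-1}th$ lies in $D$, in the right coset $Hth$, and as $h$ ranges over $H$ these cosets exhaust all $k$ right cosets of $D$ (since $H^t$ is $H$-conjugate to $H^x$, so $|H|/|H\cap H^t|=k$ too); picking $h_0$ with $Hth_0=Hx$ yields $y:=h_0^{-1}th_0\in Hx$ with $y^2=1$.

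Finally, for $(d)\Rightarrow(b)$ I would build an inverse-closed right transversal one inversion-orbit of double cosets at a time. When $HxH\ne Hx^{-1}H$, choose a common transversal of the right-coset partition and the left-coset partition of $HxH$ — it exists by Hall's theorem, both partitions having $|H|/|H\cap H^x|$ cells of size $|H|$ — and adjoin the set of its inverses; this is an inverse-closed transversal of all right cosets inside $HxH\cup Hx^{-1}H$, and no hypothesis is needed. When $D:=HxH=Hx^{-1}H$, use the elementary fact that any two right cosets $Hz,Hw$ inside $D$ satisfy $Hz\cap w^{-1}H\ne\emptyset$ (equivalently $w\in Hz^{-1}H$, which holds because $z^{-1}\in D^{-1}=D$ forces $Hz^{-1}H=D$) to pair the $k:=|H|/|H\cap H^x|$ right cosets of $D$ two at a time, taking $t$ and $t^{-1}$ with $t\in Hz\cap w^{-1}H$ as the representatives of a pair $\{Hz,Hw\}$. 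If $k$ is even this uses up $D$; if $k$ is odd one coset is left unpaired and must be its own partner, that is, must contain an involution, which is exactly what $(d)$ provides for the coset $Hx$ (the double coset $H$ itself being handled by the representative $1$). Gluing the pieces over all double cosets gives the required transversal.

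I expect the main obstacle to be the construction in $(d)\Rightarrow(b)$: recognising that the only genuine constraint is an odd-size self-inverse double coset, isolating the intersection fact that makes the pairing inside such a double coset always work, and matching the single forced involution to hypothesis $(d)$. The conjugation trick in $(b)\Rightarrow(c)$ is the other point needing care, since an inverse-closed transversal only gives an involution somewhere inside $HxH$ and one must transport it into the prescribed coset $Hx$.
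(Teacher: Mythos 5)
Your proposal is correct: the cycle $(a)\Leftrightarrow(b)$, $(b)\Rightarrow(c)\Leftrightarrow(d)\Rightarrow(b)$ closes, and I checked the delicate points — the identification of the neighbour count of $g\notin H$ with $|S\cap Hg^{-1}|$, the conjugation transport of the involution from $T\cap HxH$ into the prescribed coset $Hx$ in $(b)\Rightarrow(c)$, the computation $y^2=h_1h_2^{-1}$ and $H^y=H^x$ in $(c)\Rightarrow(d)$, and the Hall-type common left/right transversal plus the odd-leftover-coset argument in $(d)\Rightarrow(b)$ — all of which go through. Note, however, that the paper itself does not prove this statement: it is imported verbatim as \cite[Theorem 1.2]{1} (Chen--Wang--Xia), so there is no in-paper proof to compare against. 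It is worth observing that your $(d)\Rightarrow(b)$ construction is essentially the $b=1$ specialization of the machinery the paper then builds for its own purposes: your common transversal of the left- and right-coset partitions of $HxH$ when $HxH\ne Hx^{-1}H$ plays the role of one perfect matching of $K_{t,t}$ in Theorem \ref{t1}, and your pairing of right cosets inside a self-inverse double coset, with the single unpaired coset forced to contain an involution supplied by $(d)$, is the $b=1$ case of the matchings used in Theorem \ref{t3}(2). So your blind proof is not only valid but structurally consonant with how the paper generalizes the lemma from one transversal to $b$ disjoint ones.
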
 

\begin{lemma}\label{1}
	 Let $G$ be a group, $H$ a subgroup of $G$ and $x\in G\setminus H$. Then,  for each $w\in HxH$,  $\left| xH \cap Hx\right| =\left| H \cap H^x\right|=|H\cap H^w|=\left| wH \cap Hw\right|$. Moreover, the numbers of involutions in $Hx$ and $Hxh$ are equal, for each $h\in H$.  
\end{lemma}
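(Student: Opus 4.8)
The plan is to prove each of the three assertions by producing an explicit bijection of $G$ onto itself that carries one of the two sets in question onto the other; since a bijection preserves cardinality and commutes with intersection, each equality then drops out. I would not expect any substantive obstacle: the whole argument is elementary coset bookkeeping, and the only thing demanding care is keeping track of which side each copy of $H$ gets absorbed on, so that the chosen translations and conjugations land in exactly the intended cosets $kxhH$, $Hkxh$, $Hxh$ rather than in sideways-shifted variants.

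For $\lvert xH\cap Hx\rvert=\lvert H\cap H^x\rvert$, I would write $H^x=x^{-1}Hx$, so that $H\cap H^x=H\cap x^{-1}Hx$, and observe that left multiplication by $x$ is a bijection of $G$ sending this set onto $xH\cap x(x^{-1}Hx)=xH\cap Hx$. For $\lvert xH\cap Hx\rvert=\lvert kxhH\cap Hkxh\rvert$ with $h,k\in H$, I would use the two-sided translation $\psi\colon g\mapsto kgh$, a bijection of $G$. Using $hH=H$ and $kH=Hk=H$ one computes $\psi(xH)=kxHh=kxH=kxhH$ and $\psi(Hx)=kHxh=Hxh=Hkxh$, so $\psi$ maps $xH\cap Hx$ onto $kxhH\cap Hkxh$. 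Chaining the two bijections gives the full displayed chain of equalities.

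For the statement on involutions, I would first note that since $x\notin H$ we have $1\notin Hx$ (otherwise $hx=1$ forces $x=h^{-1}\in H$) and likewise $1\notin Hxh$, so inside either coset ``element with square $1$'' coincides with ``involution.'' Then I would take $c_h\colon y\mapsto h^{-1}yh$, an automorphism of $G$; it preserves element orders, hence preserves the set of involutions of $G$, and $c_h(Hx)=(h^{-1}H)xh=Hxh$. Thus $c_h$ restricts to a bijection from the set of involutions lying in $Hx$ onto the set of involutions lying in $Hxh$, so these have equal size.

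The hard part, to the extent there is one, is purely organizational: one must choose for each equality a map built from left translation, right translation, or conjugation so that every factor of $H$ is cancelled on the correct side (for instance, conjugation by $h$ rather than by $h^{-1}$ is what produces $Hxh$ from $Hx$), after which all verifications are immediate.
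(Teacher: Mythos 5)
Your proof is correct and follows essentially the same route as the paper: the paper also identifies $xH\cap Hx$ with $H\cap H^x$ via left multiplication by $x$ (phrased as the parametrization $\{xh \mid h\in H\cap H^x\}$), handles $kxhH\cap Hkxh$ by the same cardinality count combined with $H\cap H^{kxh}=(H\cap H^x)^h$, and matches involutions of $Hx$ with those of $Hxh$ by conjugation with $h$, exactly as your $c_h$ does. Your use of the two-sided translation $g\mapsto kgh$ for the middle equality is only a cosmetic repackaging of the paper's argument, so there is nothing to add or correct.
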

\begin{proof}	Let $T=xH\cap Hx$. We note that,   for $h\in H$, $xh\in T$  if and only if $xhx^{-1}\in H$.  Therefore, $T=\{xh\mid h\in H\cap H^x\}$ and so $\left| T\right| =\left| H\cap H^x\right| $. Thus, by the same manner if $w=kxh$  for some  $h,k\in H$, then  $|wH\cap Hw|=\left|kxhH\cap Hkxh\right| =\left| H\cap H^{kxh}\right|=|(H\cap H^x)^h| =\left| H\cap H^x\right| $.  Now, we show that the numbers of involutions in  $Hx$ and $Hxh$ are the same. If $a\in H$ and $ax$ is an involution in $Hx$, then $(ax)^h$ is an involution in $Hxh$. Let $b\in H$ and $bxh$ be an involution in $Hxh$. Then,  $bxh=h^{-1}(hbx)h=(hbx)^h$. Hence, $hbx$ is an involution in $Hx$. Therefore, the numbers of involutions in these two cosets are equal.   
\end{proof}
\begin{lemma}\label{key2}
	Let $H$ be a subgroup of a group $G$ and $x\in G\setminus H$.  Let  $v\in HxH$ and $y\in Hx^{-1}H$. Then $Hv$ contains exactly $|H\cap H^x|$ elements whose inverses belong to $Hy$, i.e., $|(Hv)^{-1}\cap Hy|=|H\cap H^x|$.  In particular, if $HxH=Hx^{-1}H$, then $Hv$ contains exactly $|H\cap H^x|$ elements whose inverses belong to $Hv$. 
\end{lemma}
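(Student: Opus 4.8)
The plan is to reduce the claim to a statement about a single element of the double coset $Hx^{-1}H$ and then invoke Lemma \ref{1}. First I would record that $(Ht)^{-1}=t^{-1}H$, so that $|(Ht)^{-1}\cap Hy|=|t^{-1}H\cap Hy|$; here $t^{-1}H$ is a \emph{left} coset while $Hy$ is a \emph{right} coset. Since $t\in HxH$ we have $t^{-1}\in Hx^{-1}H$, and $y\in Hx^{-1}H$ by hypothesis, so both cosets lie inside the double coset $Hx^{-1}H$.

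Next I would write $t^{-1}=ax^{-1}b$ and $y=cx^{-1}d$ with $a,b,c,d\in H$, and absorb the inner factors into the cosets: $t^{-1}H=ax^{-1}H$ and $Hy=Hx^{-1}d$. The key move is to introduce the single element $v:=ax^{-1}d\in Hx^{-1}H$, which satisfies simultaneously $vH=ax^{-1}dH=ax^{-1}H=t^{-1}H$ and $Hv=Hax^{-1}d=Hx^{-1}d=Hy$. Hence $|(Ht)^{-1}\cap Hy|=|vH\cap Hv|$; that is, the quantity we want equals the size of the intersection of the left and right cosets of $H$ determined by the \emph{same} element $v$.

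Finally I would apply Lemma \ref{1}. Since $x\notin H$ we have $x^{-1}\notin H$, and writing $v=ax^{-1}d$ with $a,d\in H$ shows $v\notin H$ as well. Applying Lemma \ref{1} with $x^{-1}$ in the role of $x$ and with $k=a$, $h=d$ yields $|vH\cap Hv|=|ax^{-1}dH\cap Hax^{-1}d|=|H\cap H^{x^{-1}}|$. It then remains to observe that the map $g\mapsto x^{-1}gx$ restricts to a bijection from $H\cap H^{x^{-1}}=H\cap xHx^{-1}$ onto $x^{-1}Hx\cap H=H\cap H^{x}$, so that $|H\cap H^{x^{-1}}|=|H\cap H^{x}|$, and the chain of equalities closes. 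I do not anticipate a genuine obstacle: the only points requiring care are keeping the left/right sides of the cosets straight throughout the rewriting, and remembering that Lemma \ref{1} must be applied to $x^{-1}$ rather than to $x$, with the final conjugation step used to pass back from $H\cap H^{x^{-1}}$ to $H\cap H^{x}$.
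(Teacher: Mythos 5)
Your proof is correct and follows essentially the same route as the paper: the paper likewise reduces to a single element $z$ with $Ht=Hz$ and $Hy=Hz^{-1}$ (your $v$ is exactly this $z^{-1}$) and then applies Lemma \ref{1} to compute $|z^{-1}H\cap Hz^{-1}|=|H\cap H^{x^{-1}}|$. Your explicit conjugation argument for $|H\cap H^{x^{-1}}|=|H\cap H^{x}|$ just spells out a step the paper leaves implicit.
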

\begin{proof}
	Let $m=|H\cap H^x|$.  Suppose  that $v=h'xh$, $y=k'x^{-1}k$, for some $h,h',k,k'\in H$. Let $z=k^{-1}xh$. Then $z \in Hv$ and $z^{-1}\in Hy$. Hence, we have $Hv=Hz$ and $Hy=Hz^{-1}$ and so  $|(Hv)^{-1}\cap Hy|=|\left( Hz\right) ^{-1}\cap
	Hz^{-1}|=|z^{-1}H\cap Hz^{-1}|=|H\cap H^{z^{-1}}|=|H\cap H^{x^{-1}}|=m$, by Lemma \ref{1}. 
\end{proof}
\begin{lemma}\label{graph}
	Let $G$ be a group and $H$ a subgroup of $G$. Let $x\in G\setminus H$,	
	  $m=|H\cap H^x|$ and $t=|H|/m$.   Then $\Gamma_x$ has $m$ simple  subgraphs $\Gamma_i$, $i=1,\dots, m$, where	 
	   ${\bf V}(\Gamma_i)={\bf V}(\Gamma_x)$	   
	    for $i\in \{1,\dots, m\}$,    ${\bf E}(\Gamma_i)\cap {\bf E}(\Gamma_j)=\emptyset$, for  $i\not =j$, and    
   ${\bf E}(\Gamma_x)=\bigcup\limits_{i=1}^m {\bf E}(\Gamma_i)$.   Moreover, if  $HxH\not =Hx^{-1}H$, then  each $\Gamma_i$ is isomorphic to $K_{t,t}$ and if  $HxH = Hx^{-1}H$, then  each $\Gamma_i$ is isomorphic to $ K_{t}$,  for  $1\leq i\leq m$. 
\end{lemma}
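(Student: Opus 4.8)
The plan is to build the subgraphs $\Gamma_i$ "edge by edge" by decomposing, for each adjacent pair of cosets $Hy, Hz$ in $\Gamma_x$, the $m_{\{y,z\}} := |z^{-1}H \cap Hy|$ parallel edges into single edges distributed among the $m$ subgraphs. The key combinatorial input is Lemma \ref{key2}: for every $t \in HxH$ (equivalently, every coset $Hy$ appearing in $\Omega_x$ on the $HxH$-side, or any coset if $HxH = Hx^{-1}H$) and any $y \in Hx^{-1}H$, the set $Ht$ contains exactly $m = |H \cap H^x|$ elements whose inverses lie in $Hy$. First I would fix a coset $C = Hy_0 \in \Omega_x$ and enumerate its $|H| = mt$ elements; Lemma \ref{key2} says these elements distribute into groups of size $m$, one group "pointing at" each of the $t$ cosets on the opposite side (i.e., in $Hx^{-1}H$ if $C \subseteq HxH$, or in $Hx$-side if $HxH = Hx^{-1}H$ — here one must be slightly careful that the coset $Hz = Hy_0^{-1}$ is excluded, which is exactly why $\Delta_x$ and the definition of $\Omega_x$ are set up as they are). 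This gives, for each element $g \in C$, a well-defined "label" $\ell(g) \in \{1, \dots, m\}$: within the group of $m$ elements of $C$ whose inverses land in a fixed opposite coset $Hz$, assign the labels $1, \dots, m$ bijectively.

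The main point is to check consistency: the label must be defined so that an edge of $\Gamma_x$ — which corresponds via $\phi$ to an edge $\{g, g^{-1}\}$ of $\Delta_x$ with $g \in Hy$, $g^{-1} \in Hz$ — gets the same label whether computed "from the $Hy$ side" or "from the $Hz$ side." I would handle this by not labelling elements but labelling the $m$ edges of $\Delta_x[B_{\{y,z\}}]$ directly: by Lemma \ref{key2} applied to the pair $(Hy, Hz)$, the set $B_{\{y,z\}}$ is a perfect matching with exactly $m$ edges, so fix any bijection $\lambda_{\{y,z\}} \colon \mathbf{E}(\Delta_x[B_{\{y,z\}}]) \to \{1,\dots,m\}$, and then pull it back through $\phi_{\{y,z\}}$ to label the $m$ parallel edges $\mathfrak{E}_{\{y,z\}}$ of $\Gamma_x$. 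Define $\mathbf{E}(\Gamma_i)$ to be the set of all edges of $\Gamma_x$ receiving label $i$, and $\mathbf{V}(\Gamma_i) = \mathbf{V}(\Gamma_x) = \Omega_x$. By construction $\mathbf{E}(\Gamma_i) \cap \mathbf{E}(\Gamma_j) = \emptyset$ for $i \ne j$ and $\bigcup_{i=1}^m \mathbf{E}(\Gamma_i) = \mathbf{E}(\Gamma_x)$, and each $\Gamma_i$ is simple since between any two cosets it contains at most one edge.

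It remains to identify the isomorphism type of each $\Gamma_i$, i.e., to show each $\Gamma_i$ is regular of the right degree on $t$ or $2t$ vertices. Count: $\Omega_x$ has $|HxH \cup Hx^{-1}H|/|H|$ cosets, which is $2t$ when $HxH \ne Hx^{-1}H$ (with a bipartition into the $t$ cosets inside $HxH$ and the $t$ inside $Hx^{-1}H$, these being disjoint) and $t$ when $HxH = Hx^{-1}H$. Fix a coset $Hy$ and an opposite coset $Hz \ne Hy$: the $m$ parallel $\Gamma_x$-edges between them get the $m$ distinct labels $1,\dots,m$, so exactly one of them lies in $\Gamma_i$. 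Hence in $\Gamma_i$, $Hy$ is adjacent to every one of the $t$ cosets on the opposite side (excluding itself when $HxH = Hx^{-1}H$), giving degree $t$ in the bipartite case — so $\Gamma_i \cong K_{t,t}$ — and degree $t - 1$ among the remaining $t-1$ vertices in the quotient-closed case — so $\Gamma_i \cong K_t$. The only subtlety I anticipate is the bookkeeping around the coset $Hz = Hy^{-1}$ that is deliberately excluded (the vertices of $\Delta_x$ of degree $0$), and making sure that in the case $HxH = Hx^{-1}H$ each unordered pair of distinct cosets is counted once; both are straightforward once one writes out that $\{Hy, Hy^{-1}\}$ contributes no $\Delta_x$-edges and hence no $\Gamma_x$-edge. $\qed$
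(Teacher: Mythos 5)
Your proposal is correct and follows essentially the same route as the paper: the paper also fixes an (arbitrary) indexing of the $m$ parallel edges $\mathfrak{e}^1_{\{y,z\}},\dots,\mathfrak{e}^m_{\{y,z\}}$ between each pair of distinct cosets and lets $\Gamma_i$ consist of the $i$-th edge from every bundle, with Lemma \ref{key2} supplying the count $m$ and the $K_{t,t}$ versus $K_t$ identification. Your observation that no cross-coset consistency of labels is needed (label edges, not elements) matches the paper's construction, and your bookkeeping about the excluded pairs with $Hy=Hz$ is also how the setup of $\Delta_x$ and $\Gamma_x$ handles it.
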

\begin{proof}
From Lemma \ref{key2}, between each pair of  distinct   vertices $Hy$ and $Hz$  in $ {\bf V}(\Gamma_x)$, where $y\in HxH$ and $z\in Hx^{-1}H$  there exist exactly $m$   distinct edges  $\mathfrak{E}_{\{y,z\}}=\{\mathfrak{e}_{\{y,z\}}^i\ | \ i=1,\dots , m\}$.  Let  $T_1$ and $T_2$  be  arbitrary right transversals of $H$ in $HxH$ and $ Hx^{-1}H$, respectively. In case $HxH=Hx^{-1}H$, we take $T_1=T_2$.  Let 
$\mathcal{T}=\{\{y,z\}\mid \ y\in T_1, z\in T_2 \  \text{and}\ y\not =z \}$. 
Then for each $i\in \{1,\dots,m\}$, we take the subgraph $\Gamma_i$ to be a  simple graph  with vertex set $ {\bf V}(\Gamma_i)={\bf V}(\Gamma_x)$ and the edge set
${\bf E}(\Gamma_i)=\{\mathfrak{e}^i_{\left\lbrace y,z\right\rbrace } |\ \ {\left\lbrace y,z\right\rbrace }\in \mathcal{T}\}$.  Clearly,   ${\bf E}(\Gamma_i)\cap {\bf E}(\Gamma_j)=\emptyset$  for  $i\not =j$, and    
   ${\bf E}(\Gamma_x)=\bigcup\limits_{i=1}^m {\bf E}(\Gamma_i)$, as claimed.   Remark that   $t=|H|/m=|HxH|/|H|=|Hx^{-1}H|/|H|$,  which means that $HxH$ and $Hx^{-1}H$ are the  union of exactly $t$ distinct right cosests of $H$.   If $HxH\not =Hx^{-1}H$, then   $\Gamma_i\cong K_{t,t}$ and  if $HxH =Hx^{-1}H$, we have $\Gamma_i\cong K_t$, by Lemma \ref{key2}. 
\end{proof}

\begin{figure}[h]
\begin {center}
\begin {tikzpicture}[auto ,node distance =3cm  ,on grid ,
semithick ,
state/.style ={ circle ,top color =white , bottom color = processred!20 ,
draw,processred , text=red , minimum width =0 cm}]
\node[state] (C){$Hx$};
\node[state] (A) [below left=of C] {$Hy$};
\node[state] (B) [below  right =of C] {$Hz$};
\path (C) edge [bend left =15] node[below =0.15 cm] {} (A);
\path (A) edge [bend right = -15,red] node[below =0.15 cm] {} (C);
\path (A) edge [bend left =15] node[above] {} (B);
\path (B) edge [bend left =15,red] node[below =0.15 cm] {} (A);
\path (C) edge [bend left =15,red] node[below =0.15 cm] {} (B);
\path (B) edge [bend right = -15] node[below =0.15 cm] {} (C);

\end{tikzpicture}
\hspace{2cm}
\begin{tikzpicture}[circ/.style={circle, draw, fill}]
	\vertex[fill] (v1) at (0,0) [label=above:$x_{1}$] {};
	\vertex[fill] (v2) at (.5,0) [label=above:$x_{2}$] {};
	\vertex[fill] (v3) at (1,0) [label=above:$x_{3}$] {};
	\vertex[fill] (v4) at (1.5,0) [label=above:$x_{4}$] {};
	\vertex[fill] (v5) at (2,0) [label=above:$x_{5}$] {};
	\vertex[fill] (v6) at (2.5,0) [label=above:$x_{6}$] {};
	
	\vertex[fill] (v7) at (-1,-1) [label=left:$y_{1}$] {};
	\vertex[fill] (v8) at (-1,-1.5) [label=left:$y_{2}$] {};
	\vertex[fill] (v9) at (-1,-2) [label=left:$y_{3}$] {};
	\vertex[fill] (v10) at (-1,-2.5) [label=left:$y_{4}$] {};
	\vertex[fill] (v11) at (-1,-3) [label=left:$y_{5}$] {};
	\vertex[fill] (v12) at (-1,-3.5) [label=left:$y_{6}$] {};
	
	\vertex[fill] (v13) at (3.5,-1) [label=right:$z_{1}$] {};
	\vertex[fill] (v14) at (3.5,-1.5) [label=right:$z_{2}$] {};
	\vertex[fill] (v15) at (3.5,-2) [label=right:$z_{3}$] {};
	\vertex[fill] (v16) at (3.5,-2.5) [label=right:$z_{4}$] {};
	\vertex[fill] (v17) at (3.5,-3) [label=right:$z_{5}$] {};
	\vertex[fill] (v18) at (3.5,-3.5) [label=right:$z_{6}$] {};
	
	\path	(v1) edge [red] (v7);
	\path	(v2) edge  (v8);
	\path	(v6) edge [red] (v13);
	\path	(v5) edge  (v14);
	\path   (v12) edge [red] (v18);
	\path  (v11) edge  (v17);
	

\end{tikzpicture}
 \caption{{\bf (a)} $\Gamma_x$  \hspace{6cm}  {\bf (b)} $\Delta_x$\hspace{3cm}}
\end{center} 
\end{figure}
\begin{figure}
	\begin{center}

		\begin {tikzpicture}[auto ,node distance =3cm  ,on grid ,
		semithick ,
		state/.style ={ circle ,top color =white , bottom color = processred!20 ,
			draw,processred , text=red , minimum width =0 cm}]

		\node[state](DD)[right =of B]{$Hw$};
		\node[state] (CC) [above= of DD]{$Hy$};
		\node[state] (BB) [right  =of CC] {$Hx$};
		\node[state] (AA) [below=of BB] {$Hz$};
		\path (AA) edge [bend left =15] node[above] {} (BB);
		\path (BB) edge [bend left =15,red ] node[below =0.15 cm] {} (AA);
		\path (BB) edge [bend left =15,red] node[right=0.15 cm] {} (DD);
		\path (DD) edge [bend right = -15] node[left =0.15 cm] {} (BB);
		\path (CC) edge [bend left =15,red] node[left =0.15 cm] {} (AA);
		\path (AA) edge [bend right = -15,] node[right=0.15 cm] {} (CC);
		\path (CC) edge [bend left =15] node[below] {} (DD);
		\path (DD) edge [bend left =15, red] node[above =0.15 cm] {}(CC);
	\end{tikzpicture}
\hspace{3cm}
\begin{tikzpicture}[circ/.style={circle, draw, fill}]
			
			\vertex[fill] (w1) at (4,0) [label=above:$y_{4}$] {};
			\vertex[fill] (w2) at (4.5,0) [label=above:$y_{3}$] {};
			\vertex[fill] (w3) at (5,0) [label=above:$y_{2}$] {};
			\vertex[fill] (w4) at (5.5,0) [label=above:$y_{1}$] {};
			\vertex[fill] (w5) at (7,0) [label=above:$x_{4}$] {};
			\vertex[fill] (w6) at (7.5,0) [label=above:$x_{3}$] {};
			\vertex[fill] (w7) at (8,0) [label=above:$x_{2}$] {};
			\vertex[fill] (w8) at (8.5,0) [label=above:$x_{1}$] {};
			\vertex[fill] (w9) at (4,-3.5) [label=below:$w_{4}$] {};
			\vertex[fill] (w10) at (4.5,-3.5) [label=below:$w_{3}$] {};
			\vertex[fill] (w11) at (5,-3.5) [label=below:$w_{2}$] {};
			\vertex[fill] (w12) at (5.5,-3.5) [label=below:$w_{1}$] {};
			\vertex[fill] (w13) at (7,-3.5) [label=below:$z_{4}$] {};
			\vertex[fill] (w14) at (7.5,-3.5) [label=below:$z_{3}$] {};
			\vertex[fill] (w15) at (8,-3.5) [label=below:$z_{2}$] {};
			\vertex[fill] (w16) at (8.5,-3.5) [label=below:$z_{1}$] {};
			
			\path	(w1) edge  [red](w9);
			\path	(w2) edge (w10);
			\path	(w3) edge (w13);
			\path	(w4) edge [red] (w14);
			\path   (w11) edge  (w5);
			\path  (w12) edge [red] (w6);
			\path  (w7) edge (w15);
			\path  (w8) edge [red] (w16);	
			
		\end{tikzpicture}
	 \caption{{\bf (a)} $\Gamma_x$  \hspace{6cm}  {\bf (b)} $\Delta_x$}
\end{center}
\end{figure}

\begin{example}\label{ex} To make  Lemma  \ref{graph} more clear, let $H$ be a subgroup of $G$ and $x\in G\setminus H$ such that $HxH=Hx^{-1}H$, $t=|H|/|H\cap H^x|=3$ and $m=|H\cap H^x|=2$.  Then by Lemma \ref{key2},  Figure 1(a) and Figure 1(b) are $\Gamma_x$ and $\Delta_x$, respectively. By the notations in Lemma \ref{graph},  ${\bf V}(\Gamma_1)={\bf V}(\Gamma_2)={\bf V}(\Gamma_x)$ and assume the red  edges  and the black edges of $\Gamma_x$ are the  edges of $\Gamma_1$ and $\Gamma_2$, respectively.  Then,  we may assume the bijective function $\phi$ (described in the first part of this section)  maps the edges of  $\Gamma_1$  (resp. $\Gamma_2$) to the red (resp. black) edges of $\Delta_x$. Thus,  ${\bf V}(\phi ({\bf E}(\Gamma_1)))=\{x_1, y_1=x_1^{-1}, x_6, z_1=x_6^{-1}, y_6,z_6=y_6^{-1}\}$ and ${\bf V}(\phi ({\bf E}(\Gamma_2)))=\{x_2,y_2=x_2^{-1}, x_5, z_2=x_5^{-1}, y_5,z_5=y_5^{-1}\}$.  
\end{example}

\begin{example}\label{ex2}
If $H$ is   a subgroup of $G$ and $x\in G\setminus H$ such that $HxH\not=Hx^{-1}H$, $|H|/|H\cap H^x|=2$ and $|H\cap H^x|=2$, then Figure 2(a) and Figure 2(b) are $\Gamma_x$ and $\Delta_x$, respectively. By the notations of Lemma \ref{graph}, ${\bf V}(\Gamma_1)={\bf V}(\Gamma_2)={\bf V}(\Gamma_x)$ and assume the red edges  and the black edges of $\Gamma_x$ are the  edges of $\Gamma_1$ and $\Gamma_2$, respectively.  Then, we may assume the bijective function $\phi$  maps the edges of  $\Gamma_1$  (resp. $\Gamma_2$) to the red (resp. black) edges of $\Delta_x$. Then we have ${\bf V}(\phi ({\bf E}(\Gamma_1)))=\{x_1,z_1=x_1^{-1},x_3,w_1=x_3^{-1},y_1,z_3=y_1^{-1},y_4,w_4=y_4^{-1}\}$ and  ${\bf V}(\phi ({\bf E}(\Gamma_2)))=\{x_2,z_2=x_2^{-1},x_4,w_2=x_4^{-1}, y_2,z_4=y_2^{-1},y_3, w_3=y_3^{-1}\}$.
\end{example}
\begin{remark}\label{rem} Let  $\phi: {\bf E}(\Gamma_x)\rightarrow {\bf E}(\Delta_x)$ be the bijection defined in the first part of this section. If  $L_1 , L_2\subseteq {\bf E}(\Gamma_x)$ such that  $L_1\cap L_2=\emptyset$, then  $\phi(L_1)\cap \phi(L_2)=\emptyset$, since $\phi$ is one to one.   Note that   $\Delta_x$ is a simple graph such that the degree of each vertex is either $0$ or $1$. 
Therefore,   ${\bf V}(\phi (L_1))\cap{\bf V}(\phi(L_2))=\emptyset$. 
\end{remark}

\begin{lemma}\label{t1}
	Let $G$ be a group and $H$ a subgroup of $G$. Let $x\in G\setminus H$ such that $HxH\ne Hx^{-1}H$. Then $H$ has $|H|$ pairwise disjoint inverse-closed right transversals in $HxH\cup Hx^{-1}H$. 
\end{lemma}
\begin{proof}
	Let  $m=|H\cap H^x|$ and $t=\left| H\right| / m $.
	  Using the notations in Lemma \ref{graph}, for each $1\leq i\leq m$,  $\Gamma_i\cong K_{t,t}$, and so  $\Gamma_i$    has $t$ disjoint perfect matchings $\mathfrak{M}_{i1}, \dots, \mathfrak{M}_{it}$, by K$\ddot{o}$nig’s 1-factorization theorem \cite{konig}.   Then,   for each $\mathfrak{M}_{ij}$ we get that   ${\bf V}(\mathfrak{M}_{ij})$ is the set of all right cosets of $H$ in $HxH\cup Hx^{-1}H$ and thus ${\bf V}(\phi(\mathfrak{M}_{ij}))$ is  a right transversal of $H$ in $HxH\cup Hx^{-1}H$. By the definition of $\Delta_x$,  the ends of each edge in $\Delta_x $ are the inverses of each other. Hence, ${\bf V}(\phi(\mathfrak{M}_{ij}))$ is   an inverse-closed  right transversal of $H$ in $HxH\cup Hx^{-1}H$, for each $i$ and $j$.  Now, we claim  that ${\bf V}(\phi(\mathfrak{M}_{ij}))\cap {\bf V}(\phi (\mathfrak{M}_{i'j'}))=\emptyset$, when $(i,j)\not=(i',j')$. As  $\mathfrak{M}_{i1}, \dots, \mathfrak{M}_{it}$,  are disjoint perfect matchings, then  by Remark \ref{rem}, ${\bf V}(\phi(\mathfrak{M}_{ij}))\cap {\bf V}(\phi (\mathfrak{M}_{ij'}))=\emptyset$, for $j\not=j'$.   By Lemma \ref{graph},  ${\bf E}(\Gamma_k)\cap {\bf E}(\Gamma_l)=\emptyset$ and so by  Remark \ref{rem},  ${\bf V}(\phi({\bf E}(\Gamma_k)))\cap  {\bf V}(\phi({\bf E}(\Gamma_l)))=\emptyset$, for each $1\leq k<l\leq m$. Thus, ${\bf V}(\phi(\mathfrak{M}_{ij}))\cap {\bf V}(\phi (\mathfrak{M}_{i'j'}))=\emptyset$, when $(i,j)\not=(i',j')$, as we claimed. Therefore,   we have exactly $|H|=mt$ disjoint  inverse-closed right transversals of $H$ in $HxH\cup Hx^{-1}H$, as wanted.       
\end{proof}
\begin{lemma}\label{t}
		Let $G$ be a group and $H$ a subgroup of $G$. Let $x\in G\setminus H$ such that $HxH= Hx^{-1}H$. If $|H|/|H\cap H^x|=2n$, for some integer $n$, then $H$ has $|H|-|H\cap H^x|$ pairwise disjoint inverse-closed right transversals in $HxH$. 
\end{lemma}
\begin{proof}
	 Let $m=|H\cap H^x|$. Using the same notations in Lemma \ref{graph}, for $1\leq i\leq m$, $\Gamma_{i}\cong K_{2n}$and so $\Gamma_i$ has exactly $2n-1$  disjoint perfect matchings  $\mathfrak{M}_{ij}$, for $j=1,\dots,2n-1$. If $\mathfrak{M}_{ij}$ is a perfect matching of $\Gamma_i$, then {\bf V}$(\phi(\mathfrak{M}_{ij}))$ is  an inverse-closed right transversal of $H$  in $HxH$.    So, by Remark \ref{rem} and  similarly to the proof of  Lemma \ref{t1}, we can find $\left( 2n-1\right) m=2nm-m=|H|-|H\cap H^x|$ disjoint inverse-closed right transversals of $H$ in $HxH$.
\end{proof}
\begin{lemma}\label{t2}
	Let $G$ be a group and $H$ a subgroup of $G$. Let $x\in G\setminus H$ such that $HxH= Hx^{-1}H$ and $|H|/|H\cap H^x|$ is even. Then for each $0\le b\le |H|$, there exist $b$ pairwise disjoint right transversals of $H$ in $HxH$  whose union is inverse-closed. 
\end{lemma}
\begin{proof}
Let $m=|H\cap H^x|$ and $t=|H|/m$.	Suppose that $T_i$,   $1\le i\le |H|-m$,  are disjoint  inverse-closed   right transversals of $H$ in $HxH$, as described in Lemma \ref{t}. If $b\le |H|-m$, then clearly $\bigcup\limits_{i=1}^b T_i$ is inverse-closed and we get the result.  So, suppose that  $b> |H|-m$. Let $M=HxH\setminus \bigcup_{i=1}^{|H|-m}T_i$.  Since  $HxH=\bigcup\limits_{h\in H}Hxh$, there exist some $h_j\in H$,  $j=1,\dots, t$, such that  $HxH=\bigcup\limits_{j=1}^{t}Hxh_j$,  a disjoint  union of  cosets of $H$ in $HxH$. Then  $|Hxh_j\cap T_i|=1$,  for each $i=1,\dots,|H|-m$ and $j=1,\dots,t$. Thus,   for each $j$, $Hxh_j\setminus \bigcup_{i=1}^{|H|-m}T_i$  has exactly $m$ elements.    Hence, we conclude that  $M=HxH\setminus \bigcup_{i=1}^{|H|-m}T_i$ is a union of $m$  disjoint right transversals of $H$ in $HxH$.  As $HxH=Hx^{-1}H$ and  $\bigcup\limits_{i=1}^{|H|-m} T_i$ is inverse-closed, it follows that  $M=M^{-1}$.  Remark that $|H|/m\geq 2$, we have  $b> |H|-m\geq m$. Then,  $M\cup \bigcup\limits_{i=1}^{b-m}  T_i$ is  a union of $b$ right transversals of $H$ in $HxH$ and it is inverse-closed. 
\end{proof}
\begin{lemma}\label{odd}
Let $\Gamma\cong K_{2n+1}$,  for some integer $n\geq 1$,  and ${\bf V}(\Gamma)=\{v_1,\dots,v_{2n+1}\}$. Then, for each $1\leq i\leq 2n+1$, there exists   a matching $E_i$ for $\Gamma$ with vertex set ${\bf V}(\Gamma)\setminus\{v_i\}$, such that  for $i\not =j$,  $E_i\cap E_j=\emptyset $. 
\end{lemma}
\begin{proof}
 For each $a\in \mathbb{Z}$, assume  $\overline{a}$ is an integer such that  $1\leq \overline a\leq 2n+1$ and  $ a\equiv\overline{a} \pmod{2n+1} $.  Then for each $i=1,\dots, 2n+1 $, we define the matching with $n$ edges as following (see Figure 3): 
 $$E_{i}=\{\{v_{\overline{i-l}},  v_{\overline{i+l}}\}\in {\bf E}(\Gamma) \mid 1\leq l\leq n   \}.$$ To prove that $E_i$ is a matching we need to show that every two edges in $E_i$ have disjoint ends. On the contrary, assume that $E_i$ has two distinct  edges which have an end in common.  Then, there is $1\leq l<k\leq n$ such that $\overline {i+k}\equiv  \overline{i\pm l}\pmod{2n+1}$. Therefore,  $k\mp l\equiv 0 \pmod{2n+1} $,   which is not possible, as  $1\leq l<k\leq n$. Thus,  $E_i$ is a matching whose vertex set is ${\bf V}(\Gamma)\setminus \{v_i\}$.  Now, we prove that for $1\leq i<j\leq 2n+1$,   $E_i$ and $E_j$ are  disjoint matchings. On the contrary, assume that $e\in E_i\cap E_j$. Thus, there exist $1\leq l\leq k\leq 2n+1 $ such that   $e=\{v_{\overline{i-l}}, v_{\overline{i+l}}\}= \{v_{\overline{j-k}},  v_{\overline{j+k}}\}$.  So either $i+l\equiv j+k \pmod{2n+1}$ and $i-l\equiv j-k\pmod{2n+1}$; or $i+l\equiv j-k \pmod{2n+1}$ and $i-l\equiv j+k\pmod{2n+1}$.  In both cases $2i\equiv 2j \pmod {2n+1}$, which implies that $i\equiv j \pmod{2n+1}$ and so $i=j$, a contradiction.   
\end{proof}
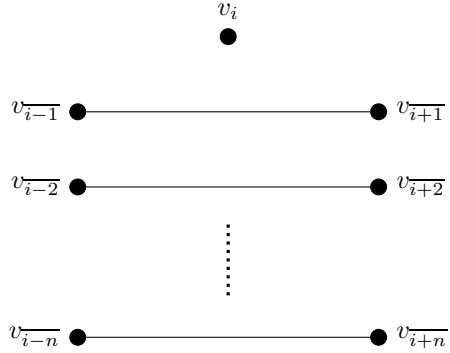
\begin{figure}
\begin {center}

	\begin{tikzpicture}
	\vertex[fill] (v1) at (0,-2) [label=above:$v_{i}$] {};
		
		\vertex[fill] (v2) at (-2,-3) [label=left:$v_{\overline{i-1}}$] {};
		\vertex[fill] (v3) at (2,-3) [label=right:$v_{\overline{i+1}}$] {};
		\vertex[fill] (v4) at (-2,-4) [label=left:$v_{\overline{i-2}}$] {};
		\vertex[fill] (v5) at (2,-4) [label=right:$v_{\overline{i+2}}$] {};
		
		\vertex[fill] (v6) at (-2,-6) [label=left:$v_{\overline{i-n}}$] {};
		\vertex[fill] (v7) at (2,-6) [label=right:$v_{\overline{i+n}}$] {};
	
		\path	(v2) edge  (v3);
		\path	(v4) edge  (v5);
		\path	(v6) edge  (v7);
		\path	(0,-4.5) edge [dotted, very thick]  (0,-5.5);

\end{tikzpicture}
\end{center}
\caption{A maximal  matching in  a complete graph with odd vertices}
\end{figure}
	\begin{theorem}\label{t3}	Let $G$ be a group and $H$ a subgroup of $G$. Let $x\in G\setminus H$ such that $HxH= Hx^{-1}H$ and $|H|/|H\cap H^x|=2n+1$, for some integer $n$. 
		
		$1)$ If $b$ is an even integer with   $0\le b\le |H|$, then there exist  $b$ pairwise disjoint right transversals of $H$ in $HxH$ whose union is inverse-closed.
		
		$2)$ If $H$ is a perfect code of $G$, then for  each $0\le b\le |H|$, there exist $b$ pairwise disjoint right transversals of $H$ in $HxH$, whose union is inverse-closed.
\end{theorem}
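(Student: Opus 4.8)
The plan is to treat the two parts separately, with part (1) handled by a perfect-matching decomposition of $K_{2n+1}$ together with a parity bookkeeping of leftover edges, and part (2) using Lemma \ref{1.2}(d) to supply the missing involution. For part (1), fix $m=|H\cap H^x|$ and use the decomposition $\Gamma_x=\bigcup_{i=1}^m\Gamma_i$ from Lemma \ref{graph}, where each $\Gamma_i\cong K_{2n+1}$. A complete graph on an odd number $2n+1$ of vertices is $(2n)$-regular and decomposes into $n$ edge-disjoint perfect matchings of the complement of... — more precisely, $K_{2n+1}$ has a near-1-factorization into $2n+1$ near-perfect matchings each of size $n$, but what I actually want is: the edge set of $K_{2n+1}$ partitions into $n$ Hamilton cycles, or equivalently, for the cosets picture, I can pair up the $2n+1$ vertices. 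The clean route: in $K_{2n+1}$, pick any vertex $v$; the graph $K_{2n+1}-v\cong K_{2n}$ has $2n-1$ disjoint perfect matchings, and adding back $v$ we can extend $2n$ of the edges at $v$ one at a time. Translating through $\phi$, each perfect matching of a $\Gamma_i$ restricted appropriately yields a right transversal of $H$ in $HxH$ whose corresponding vertex set is inverse-closed, and by Remark \ref{rem} transversals coming from distinct $\Gamma_i$'s (or from disjoint edge sets within one $\Gamma_i$) are pairwise disjoint. The count $(2n-1)m<|H|=(2n+1)m$ falls short, so as in the proof of Theorem \ref{t2} I will take the leftover edge set $M=HxH\setminus\bigcup T_i$, which is a union of $2m$ disjoint right transversals and is inverse-closed; then for even $b\le|H|$, if $b\le(2n-1)m$ we take $\bigcup_{i=1}^b T_i$ after checking $(2n-1)m$ has the right parity relative to $b$, and otherwise we take $M$ together with enough of the $T_i$'s, using that $b-2m$ or $b-4m$ adjustments keep $b$ even because $M$ splits into an even number of transversals — this is where I must be careful, since unlike in Theorem \ref{t2} the block $M$ has size $2m$ rather than $m$, so only even-sized pieces of $M$ can be peeled off, which is exactly why $b$ is required to be even here.

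The main obstacle in part (1) is the parity synchronization: I need to show that any even $b$ with $0\le b\le |H|=(2n+1)m$ can be realized as (a sub-union of the single-edge transversals $T_i$) $\cup$ (a sub-union of the $2$-transversal blocks in $M$), and this requires that the gap $|H|-(\text{number of }T_i\text{'s})=(2n+1)m-(2n-1)m=2m$ be covered by even increments. Since $b$ is even and $2m$ is even, every even $b$ is reachable provided $(2n-1)m$ is not itself an obstruction; if $(2n-1)m$ is odd (i.e.\ $m$ odd), then I cannot use an odd number of the $T_i$'s to hit an even target without dipping into $M$, but I can always use an even number of $T_i$'s and then fill with even pieces of $M$, and conversely for large $b$ I take all of $M$ (size $2m$, even) plus an even number of $T_i$'s. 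A short case analysis on $b \bmod 2$ versus the available even-sized building blocks closes this.

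For part (2), the extra input is that $H$ is a perfect code of $G$: by Lemma \ref{1.2}(d), since $HxH=Hx^{-1}H$ and $|H|/|H\cap H^x|=2n+1$ is odd, there exists $y\in Hx$ with $y^2=1$, i.e.\ the coset $Hx$ contains an involution. By Lemma \ref{1}, every coset $Hxh$ then contains an involution as well, so in each $\Gamma_i\cong K_{2n+1}$ there is a vertex carrying a loop in $\Delta_x$ (a fixed point of the inversion map), and this lets me produce a near-perfect matching plus one fixed vertex that together still form an inverse-closed right transversal. Concretely, this gives one extra inverse-closed transversal $T_0$ per $\Gamma_i$ beyond the $2n-1$ from $K_{2n}$, or better, it lets me build transversals of size $m$ (not just $2m$) out of the leftover, because the involution-carrying elements can be split off singly. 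With single-element flexibility restored, the argument of Theorem \ref{t2} applies verbatim: for every $b$ with $0\le b\le|H|$, either $b\le|H|-m$ and we take $\bigcup_{i=1}^b T_i$, or we take the inverse-closed leftover (now decomposable into $m$ single transversals) together with $\bigcup_{i=1}^{b-m}T_i$. I expect part (2) to be routine once part (1)'s machinery and Lemma \ref{1.2} are in hand; the genuine work is the parity argument in part (1).
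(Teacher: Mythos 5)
Your part (1) breaks down at its foundation. You try to imitate Theorems \ref{t} and \ref{t2}: produce $(2n-1)m$ pairwise disjoint inverse-closed transversals from perfect matchings, then peel even-sized pieces off an inverse-closed leftover $M$. But each $\Gamma_i\cong K_{2n+1}$ has an odd number of vertices, so it has no perfect matchings, and your ``clean route'' (delete a vertex $v$, $1$-factorize $K_{2n}$, then ``extend $2n$ of the edges at $v$ one at a time'') does not yield transversals of $H$ in $HxH$: the corresponding vertex set of a perfect matching of $K_{2n+1}-v$ is an inverse-closed transversal of $H$ in $HxH\setminus Hx_v$ only, the coset $Hx_v$ being missed entirely, and you cannot adjoin an edge at $v$ without covering some other coset twice. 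Consequently the leftover $M=HxH\setminus\bigcup T_i$ of your construction is not a union of right transversals at all: it contains all $(2n+1)m$ elements of the deleted coset $Hx_v$ but only $2m$ elements of each of the other $2n$ cosets. No amount of parity bookkeeping on $b$ can repair this; the parity of $b$ is not where the difficulty lies.

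The missing idea is how to complete a near-perfect matching at the coset it misses while keeping the union inverse-closed. By Lemma \ref{key2} each coset $Hx_i$ contains an inverse-closed subset of size $m=c+2d$, made of $c$ involutions and $d$ inverse pairs, and the paper uses the rotational near-$1$-factorization of $K_{2n+1}$ (matchings $E_{i,j}$ missing the vertex $Hx_i$): completing such a matching by an involution $r_{ij}\in Hx_i$ gives a single inverse-closed transversal (type $\mathcal{T}$), while gluing a pair $\{s_{il},s_{il}^{-1}\}\subseteq Hx_i$ to two near-matchings missing $Hx_i$, taken from two distinct $\Gamma_j$'s, gives an inverse-closed union of two transversals (type $\mathcal{R}$). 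Since without the perfect code hypothesis one may have $c=0$, only the even-sized type $\mathcal{R}$ blocks are guaranteed, which is the true source of the hypothesis ``$b$ even'' in part (1); one must also verify that all these blocks are pairwise disjoint, a check that occupies most of the paper's proof and is absent from yours. Your part (2) correctly invokes Lemma \ref{1.2}(d) and Lemma \ref{1} to get an involution in every coset of $HxH$, which is indeed the extra ingredient needed for odd $b$, but since you graft it onto the broken part (1) machinery (``the argument of Theorem \ref{t2} applies verbatim''), it does not stand on its own either.
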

\begin{proof}
	Let $m=|H\cap H^x|$.  By Lemma \ref{key2}, we get that $Hx$ has a maximal inverse-closed subset  $A$ with $m$ elements. If  $Hx$ and so $A$ has exactly   $c$ involutions, for some integer $c\geq 0$, then $|A|-c=2d$, for some $d\ge 0$, which means $m=c+2d$.  Let  $HxH=Hx_1\cup Hx_2\cup\dots\cup Hx_{2n+1}$, for some  $x_i\in HxH$, $1\le i\le 2n+1$.  By Lemmas \ref{1} and  \ref{key2}, for each  $i=1,\dots, 2n+1$,  $Hx_i$  has  a maximal inverse-closed subset with $m $ elements,  and this  set contains exactly $c$ involutions, say  $\{r_{i1},\dots, r_{ic}\}$.  So  we  set $\left\lbrace r_{i1},r_{i2},\dots,r_{ic},s_{i1},s_{i1}^{-1},s_{i2},s_{i2}^{-1},\dots,s_{id},s_{id}^{-1}\right\rbrace $  to be the  inverse-closed subset of   $Hx_i$.  Using the same notations in Lemma \ref{graph}, we see that $\Gamma_x$ has $m$ subgraphs $\Gamma_j\cong K_{2n+1}$,  where $1\le j\le m$.  We remind that ${\bf E}(\Gamma_x)=\bigcup\limits_{j=1}^m {\bf E}(\Gamma_j)$ and ${\bf E}(\Gamma_j)\cap {\bf E}(\Gamma_{j'})=\emptyset$,   for each $1\leq j<j'\leq m$ and by Remark \ref{rem}, we get that  {\bf V}$(\phi({\bf E}(\Gamma_j)))\cap${\bf V}$(\phi({\bf E}(\Gamma_{j'})))=\emptyset$.  By Lemma \ref{odd},  $\Gamma_j$ has $2n+1$ pairwise disjoint matchings $E_{1,j},\dots, E_{2n+1,j}$,  for each $1\leq j\leq m$, such that ${\bf V}(E_{i,j})={\bf  V}(\Gamma_j)\setminus \{Hx_i\}$. 
  Thus,   {\bf V}$(\phi(E_{i,j}))$ is an inverse-closed right transversal of $H$ in $HxH\setminus Hx_i$.  
	For $1\leq i\leq 2n+1$ and  $1\leq j\leq c$,  set $T_{i,j}=\{r_{ij}\}\cup {\bf V}(\phi(E_{i,j}))$.   Then,  $T_{i,j}$ forms an inverse-closed right transversal of  $H$ in $HxH$, as $r_{ij}$ is an involution in $Hx_i$. 	
	 Also for $1\leq i\leq 2n+1$ and  $1\leq l\leq d$, 
	 we set  $R_{i, l}=\{s_{il}, s_{il}^{-1}\}\cup {\bf V}(\phi(E_{i,c+2l-1}))\cup {\bf V}(\phi( E_{i,c+2l}))$, which   is a  union of two disjoint right transversals of $H$ in $HxH$, and clearly,  is inverse-closed. 
	
	 Set $\mathcal{T}=\{T_{i,j} \mid \  1\leq i\leq 2n+1 \  \text{and} \ 1\leq j \leq c \}$ and $\mathcal{R}=\{R_{i,l} \mid \ 1\leq i\leq 2n+1 \  \text{and} \  1\leq l\leq d \}$. If  $S$  is a member of  $\mathcal{T}$ or $\mathcal{R}$, we say that $S$ is of type $\mathcal{T}$ or $\mathcal{R}$, respectively. We claim that members of $\mathcal{T}\cup \mathcal{R}$ are pairwise disjoint. We assume that  there exist two distinct members  $S_1, S_2\in \mathcal{T}\cup \mathcal{R}$  such that $S_1\cap S_2\not = \emptyset$.  To get a contradiction, we consider the following cases, separately.   
	 
	$\bullet$ First,  let both sets be of type $\mathcal{T}$. Then $S_1=T_{i,j}=\{r_{ij}\}\cup{\bf V}(\phi(E_{i,j}))$ and $S_2=T_{i',j'}=\{r_{i'j'}\}\cup {\bf V}(\phi(E_{i',j'}))$, for some integers $1\leq i,i'\leq 2n+1$ and  $1\leq j, j'\leq c$. Note that  by Lemma \ref{graph},  $E_{i,j}\cap E_{i',j'} \subseteq {\bf E}(\Gamma_{j})\cap {\bf E}(\Gamma_{j'})=\emptyset$,  for $j\not =j'$ and by Lemma  \ref{odd},  $E_{i,j}\cap E_{i',j}=\emptyset$,  for $i\not =i'$. Thus,  if  $(i,j)\not =(i',j')$,  then $E_{i,j}$  and $E_{i',j'}$  are  disjoint subsets of $\Gamma_x$  and so by Remark \ref{rem},  ${\bf V}(\phi(E_{i,j}))\cap {\bf V}(\phi(E_{i',j'}))=\emptyset$.  Hence,
 $S_1$ and $ S_2$ contain $r_{ij}=r_{i'j'}$, which implies that $i=i'$ and $j=j'$, a contradiction. 

	 $\bullet$ Now, let  $S_1$ and $S_2$ be of type $\mathcal{R}$. Then,   $S_1=R_{i,l}=\{s_{il}, s_{il}^{-1}\}\cup {\bf V}(\phi(E_{i,c+2l-1}))\cup {\bf V}(\phi(E_{i,c+2l}))$ and $S_2=R_{i',l'}=\{s_{i'l'}, s_{i'l'}^{-1}\}\cup {\bf V}(\phi(E_{i',c+2l'-1}))\cup {\bf V}(\phi(E_{i',c+2l'}))$, for some integers $1\leq i,i'\leq 2n+1$ and $1\leq l, l'\leq d$.   Note that  by Lemma \ref{graph},  $(E_{i,c+2l-1}\cup E_{i,c+2l}) \cap (E_{i',c+2l'-1}\cup E_{i', c+2l'}) \subseteq ({\bf E}(\Gamma_{c+2l-1})\cup {\bf E}(\Gamma_{c+2l}))\cap ({\bf E}(\Gamma_{c+2l'-1})\cup{\bf E}(\Gamma_{c+2l'}))=\emptyset$,  for $l\not =l'$ and by Lemma \ref{odd},  $(E_{i,c+2l-1}\cup E_{i,c+2l}) \cap (E_{i',c+2l-1}\cup E_{i', c+2l})=\emptyset$,  for $i\not =i'$. Thus, if  $(i,l)\not =(i',l')$, then $(E_{i,c+2l-1}\cup E_{i,c+2l})\cap(E_{i',c+2l'-1}\cup E_{i',c+2l'})=\emptyset$   and so by Remark \ref{rem},  $({\bf V}(\phi(E_{i,c+2l-1}))\cup {\bf V}(\phi(E_{i,c+2l})))\cap ({\bf V}(\phi(E_{i',c+2l'-1}))\cup {\bf V}(\phi(E_{i',c+2l'})))=\emptyset$.
  Thus, $S_1\cap  S_2$ contains $\{s_{il}, s_{il}^{-1}\}=\{s_{i'l'}, s_{i'l'}^{-1}\}$, which means that  $i=i'$ and $l=l'$,  a contradiction.  
	 
	 $\bullet$ Finally, let  $S_1$ and $S_2$ be  of type $\mathcal{T}$ and  type $\mathcal{R}$, respectively.  Let $S_1=T_{i,j}$ and $S_2=R_{i',l}$ for some	 
	  $1\leq i,i'\leq 2n+1$, $1\leq j\leq c$ and $1\leq l\leq d$.  Note that  $ S_2$ does not contain any involution.  
	   On the other hand,  $S_1\setminus \{r_{ij}\}\subseteq  {\bf V}(\phi({\bf E}(\Gamma_{j})))$ and   $S_2\setminus \{s_{i'l}, s^{-1}_{i'l}\}\subseteq  {\bf V}(\phi({\bf E}(\Gamma_{c+2l-1})))\cup   {\bf V}(\phi({\bf E}(\Gamma_{c+2l})))$ and as they are disjoint by Remark \ref{rem}, we get a contradiction. So our claim is proved. 
	 
	We first aim to prove Part (1) of this theorem.  Assume that $b$ is even. 	
	 If $b\leq 2d(2n+1)\le |H|=m(2n+1)$,  then we take a  union of  $b/2$ distinct  sets of type  $\mathcal{R}$. Clearly,  this set is inverse-closed, as requested.  If $b>  2d(2n+1)$, then  $\bigcup_{j=1}^d\bigcup_{i=1}^{2n+1} R_{i,j}$  with  $b-2d(2n+1)$ distinct  sets of type  $\mathcal{T}$ gives us the desired set.
	 So Part (1) is proved. 
	 
	  Now, we prove Part (2) of the theorem.  By Part (1), we suppose that  $b$ is odd.    Note that, as  $H$ is a subgroup perfect code,  we conclude that $c\geq 1$ and so   $\mathcal{T}$ is not empty.  If   $b<2d(2n+1)$, we  take $(b-1)/2$ sets of type  $\mathcal{R}$ and the union of them with  $T_{1,1}$ forms an  inverse-closed set of $b$ pairwise disjoint right transversals of $H$. Remind  that $b\leq |H|=m(2n+1)$.  If $b>  2d(2n+1)$, then similarly to the previous case, $\bigcup_{j=1}^d\bigcup_{i=1}^{2n+1} R_{i,j}$  with  $b-2d(2n+1)$ distinct  sets of type  $\mathcal{T}$ gives us the desired set. So the proof of  Part (2) is complete. 
\end{proof}
Now, we are ready to prove Theorem A.
\begin{PMa} The "only if" part is clear. So we prove the "if" part.  Let $H$ be a perfect code of $G$. By Lemmas \ref{t1}, \ref{t2} and Theorem \ref{t3}(2),  for each $x\in G\setminus H$, and for each $0\leq b \leq |H|$, there exists an inverse-closed  set of union of $b$ pairwise  disjoint  right transversals of $H$ in $HxH\cup Hx^{-1}H$, say $T_x^b$.  Let  $G=\bigcup\limits_{k=1}^{\gamma} (Hx_kH\cup Hx_k^{-1}H)$ be a disjoint union of double cosets of $H$ in $G$,  for some $x_k\in G$, $1\le k\le \gamma$. Then,   $T^b=\bigcup\limits_{k=1}^{\gamma} T_{x_k}^b$  construct  an inverse-closed  set of union of $b$ pairwise disjoint  right transversals of $H$ in $G$. 
	
	By assumption,   ${\rm gcd}(2, |H| -1)$ divides $a$. Then, 
	$a$ is even, if $|H|$ is odd. If $|H|$ is odd, then $H \setminus \{1\}$ is partitioned into pairs of elements that are
	inverses of each other, and so $H \setminus \{1\}$ has an inverse-closed subset of size $a$ for each
	even integer $0\leq a\leq  |H|-1$.  Let $|H|$ be even  and $I$ be the set of involutions in $H $. Clearly, $|I|$ is odd. If $a\le|I|$, then we take $T^a\subseteq I$ to be a set of size $a$. If $a> |I|$ and $a$  is even, then we take $T^a$ to be  a union of $|I|-1$ elements of $I$ with  an inverse-closed  subset of  $H\setminus \left( I\cup \left\lbrace 1\right\rbrace \right) $  of size $a-|I|+1$.  If $a> |I|$ and $a$  is odd, then take $T^a$ to be a  union of $I$ with an inverse-closed  subset of $H\setminus \left( I\cup \left\lbrace 1\right\rbrace \right) $  with $a-|I|$ elements.  Thus,  in any case, we conclude that
	there exists an inverse-closed subset $T^a$ of $H\setminus \{1\}$ with $|T^a| = a$. 	
	Then,  setting   $S=T^b\cup T^a$,  $H$ is an $(a,b)$-regular set of $\Cay(G, S)$, as desired.  \qed
\end{PMa}
The proof of Theorem B is also  a consequence  of   Lemmas \ref{t1}, \ref{t2} and Theorem \ref{t3}:
\begin{PMb}
	Let $b$ be any even integer. Then using  Lemmas \ref{t1}, \ref{t2} and Theorem \ref{t3}(1),  for each $x\in G\setminus H$, and for every even integer  $0\leq b \leq |H|$, there exists an inverse-closed  set which is a union of $b$ pairwise  disjoint  right transversals of $H$ in $HxH\cup Hx^{-1}H$, say $T_x^b$. Hence,  similarly  to the  proof of  Theorem A we get the results. \qed
\end{PMb}

\end{document}